\newtheorem{theorem}{Theorem} [section]
\newtheorem{lemma}[theorem]{Lemma}
\newtheorem{proposition}[theorem]{Proposition}
\newtheorem{remark}[theorem]{Remark}
\newtheorem{corollary}[theorem]{Corollary}
\newtheorem*{ackno}{Acknowledgements}
\numberwithin{equation}{section}
\numberwithin{theorem}{section}
\begin{document}
\baselineskip = 14pt

\title[Exact Temporal Variation for Fractional Stochastic Heat Equation]
{Exact Temporal Variation for Fractional Stochastic Heat Equation Driven by Space-Time White Noise}

\author[Y. Li H. Shu and L. Yan]
{Yongkang Li, Huisheng Shu and Litan Yan}

\thanks{Y. Li was supported by the Fundamental Research Funds for the Central Universities of China (CUSF-DH-T-2024066). H. Shu was supported by the National Natural Science Foundation of China (No. 62073071). L. Yan was supported by the National Natural Science Foundation of China (No. 11971101).}

\address{
Yongkang Li\\
Donghua University, 2999 North Renmin Rd., Songjiang, Shanghai 201620, P.R. China}

\email{yongkangli@mail.dhu.edu.cn}

\address{
Huisheng Shu\\
Donghua University, 2999 North Renmin Rd., Songjiang, Shanghai 201620, P.R. China
}

\email{hsshu@dhu.edu.cn}

\address{
Litan Yan\\
Donghua University, 2999 North Renmin Rd., Songjiang, Shanghai 201620, P.R. China
}

\email{litan-yan@hotmail.com}

\subjclass[2020]{60H15, 60G15, 60H05, 60H30, 35A08}

\keywords{Parameter estimates; Space-time white noise; Stochastic partial differential equations; Temporal variation; Fractional stochastic heat equations.
}

\begin{abstract}  
In this paper, we consider the exact fractional variation for the temporal process of the solution to the fractional stochastic heat equation on $\mathbb{R}$ driven by a space-time white noise, and as an application we give the estimate of drift parameter.
\end{abstract}

\date{\today}
\maketitle
%

%
% \tableofcontents

\baselineskip = 14pt

\section{Introduction and Preliminaries}

In this paper, we consider the temporal variation of the solution $ \{u(t,x),t\geq0,x\in\mathbb{R}\} $ to the Cauchy problem for the one-dimensional fractional stochastic heat equation of the form
\begin{equation}\label{eq:equation1}
\left\{
\begin{aligned}
&\frac{\partial}{\partial t}u=\Delta_\alpha u+\sigma(u)\frac{\partial^2}{\partial t\partial x}W,\quad t\geq0,x\in\mathbb{R},\\
&u(0,x)=0,
\end{aligned}
\right.
\end{equation}
where $ W $ is a space-time white noise, $\Delta_\alpha=-(-\Delta)^{\alpha/2} $ is the fractional Laplacian operator with $ \alpha\in (1,2]$ and $u\mapsto \sigma(u)$ is a Lipschitz function. Our main aim is to prove the following theorem.
\begin{theorem}\label{th:temporal variation2}
Let $\alpha\in (1,2]$ and $ u(t,x) $ be the solution to (\ref{eq:equation1}). Consider fixed values $ 0< T_1<T_2 $ and $ x\in\mathbb{R} $. For $ i=0,1,2,\dots, n $, we define a partition of $[T_1,T_2]$ by $ t_i=T_1+i\delta $, where $ \delta=\frac{T_2-T_1}{n} $. Then the following limit holds in probability:
\begin{equation*}
\lim\limits_{n\rightarrow\infty}\frac{1}{n^{1/\alpha}}\sum_{i=1}^{n} (u(t_i,x)-u(t_{i-1},x))^2=c_\alpha(T_2-T_1)^{-\frac{1}{\alpha}} \int_{T_1}^{T_2}\sigma^2(u(r,x))dr,
\end{equation*}
where $c_\alpha=\dfrac{\Gamma(1/\alpha)}{\pi(\alpha-1)}$. Moreover, we have
\begin{multline}\label{eq:th_2}
\mathbb{E}\left|\frac{1}{n^{1/\alpha}}\sum_{i=1}^{n}(u(t_i,x)-u (t_{i-1},x))^2 -c_\alpha(T_2-T_1)^{-\frac{1}{\alpha}}\int_{T_1}^{T_2}\sigma^2(u(r,x))dr\right|\\
\leq C_{\alpha,T_1,{T_2}}n^{-\frac{(\alpha+1)(\alpha-1)}{2\alpha(3\alpha-1)}},
\end{multline}
with a positive constant $ C_{\alpha,T_1,T_2}$.
\end{theorem}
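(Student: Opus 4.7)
\noindent\emph{Proof plan.}
The plan is to base the analysis on the mild formulation
\[
u(t,x)=\int_0^t\!\!\int_{\mathbb{R}}G_\alpha(t-s,x-y)\,\sigma(u(s,y))\,W(ds,dy),
\]
where $G_\alpha(t,\cdot)$ is the fractional heat kernel with Fourier transform $e^{-t|\xi|^\alpha}$, and to reduce the argument to three ingredients: (i) a Plancherel identity that produces $c_\alpha$, (ii) a Lipschitz ``freezing'' of $\sigma$, and (iii) a Riemann-sum convergence with explicit rate.

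First, for the calibrating case $\sigma\equiv 1$ I would split the increment $u(t_i,x)-u(t_{i-1},x)=A_i+B_i$ into the $\mathcal{F}_{t_{i-1}}$-measurable ``old-noise'' integral of $G_\alpha(t_i-s,\cdot)-G_\alpha(t_{i-1}-s,\cdot)$ on $[0,t_{i-1}]$ and the independent ``new-noise'' integral of $G_\alpha(t_i-s,\cdot)$ on $[t_{i-1},t_i]$. It\^o's isometry and Plancherel convert the variance into
\[
V_i\;:=\;\mathbb{E}[(A_i+B_i)^2]\;=\;\frac{1}{2\pi}\int_{\mathbb{R}}\frac{1-e^{-\delta|\xi|^\alpha}}{|\xi|^\alpha}\,d\xi\;-\;\frac{1}{4\pi}\int_{\mathbb{R}}\frac{(1-e^{-\delta|\xi|^\alpha})^2 e^{-2t_{i-1}|\xi|^\alpha}}{|\xi|^\alpha}\,d\xi,
\]
and the substitution $u=\delta|\xi|^\alpha$ followed by an integration by parts identifies the first integral with $\Gamma(1/\alpha)\delta^{1-1/\alpha}/(\pi(\alpha-1))=c_\alpha\delta^{1-1/\alpha}$, while the second integral is $O(\delta/t_{i-1}^{1/\alpha})$, of lower order.

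Returning to Lipschitz $\sigma$, I would write $u(t_i,x)-u(t_{i-1},x)=\sigma(u(t_{i-1},x))\,\Xi_i+\varepsilon_i$ with $\Xi_i:=A_i+B_i$ the pure-noise increment just computed, and with $\varepsilon_i=\int G_\alpha[\sigma(u(\cdot))-\sigma(u(t_{i-1},x))]\,dW$ the replacement error. The Lipschitz property together with the $L^2(\Omega)$ H\"older regularity of $u$---temporal exponent $(\alpha-1)/(2\alpha)$ and spatial exponent $(\alpha-1)/2$, read off the same Plancherel identity and transferred to the nonlinear equation by Picard iteration and a weakly singular Gr\"onwall lemma---yields $\mathbb{E}[\varepsilon_i^2]^{1/2}\le C_{\alpha,T_2}\delta^{(\alpha-1)/\alpha}$. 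Expanding $(\sigma(u(t_{i-1},x))\Xi_i+\varepsilon_i)^2$ and applying Cauchy--Schwarz to the cross term gives
\[
\frac{1}{n^{1/\alpha}}\sum_{i=1}^{n}\mathbb{E}\bigl|2\sigma(u(t_{i-1},x))\Xi_i\varepsilon_i\bigr|\;\lesssim\;\frac{n}{n^{1/\alpha}}\cdot\delta^{(\alpha-1)/(2\alpha)}\cdot\delta^{(\alpha-1)/\alpha}\;=\;(T_2-T_1)^{3(\alpha-1)/(2\alpha)}\,n^{(1-\alpha)/(2\alpha)},
\]
precisely the right-hand side of~(\ref{eq:th_2}); the $\sum\varepsilon_i^2$ term has the better rate $\delta^{2(\alpha-1)/\alpha}$, and the centred fluctuation $\sum\sigma^2(u(t_{i-1},x))(\Xi_i^2-V_i)$ is handled by a covariance computation based on the Fourier representation and on the independence of the new-noise pieces $B_i,B_j$ for $i\ne j$. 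Finally, $\frac{1}{n^{1/\alpha}}\sum_{i=1}^{n}\sigma^2(u(t_{i-1},x))V_i\to c_\alpha(T_2-T_1)^{-1/\alpha}\int_{T_1}^{T_2}\sigma^2(u(r,x))\,dr$ in $L^1$ by Riemann-sum convergence.

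The principal obstacle is the sharp H\"older input in Step 2: transferring the temporal regularity of order $(\alpha-1)/(2\alpha)$ from the pure-noise case to the full nonlinear equation with a constant depending only on $\alpha$ and $T_2$ requires iterating the Plancherel bound through a Picard scheme and absorbing the Lipschitz constant via a Gr\"onwall inequality with weakly singular kernel. A secondary book-keeping point is that the boundary correction $e^{-2t_{i-1}|\xi|^\alpha}$, which prevents $V_i$ from being exactly $c_\alpha\delta^{1-1/\alpha}$, must be tracked through the sum but only contributes at order $n^{-1/\alpha}$ after normalisation, hence is dominated by the main error already identified.
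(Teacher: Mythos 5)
Your overall architecture --- Gaussian proxy for the increment, Lipschitz freezing of $\sigma$, Riemann-sum limit with a H\"older-rate error --- is the same as the paper's, your Plancherel computation of $V_i$ and of $c_\alpha$ is correct, and your cross-term bound does reproduce the rate $n^{(1-\alpha)/(2\alpha)}$ of~(\ref{eq:th_2}). The freezing error is also not where the trouble lies: freezing at $t_{i-1}$ gives $\mathbb{E}[\varepsilon_i^2]^{1/2}\lesssim\delta^{(\alpha-1)/\alpha}$, which is in fact sharper than the bound $\delta^{2(\alpha-1)/(3\alpha-1)}$ of the paper's Lemma~\ref{le:lemma-estimation}. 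The genuine gap is in your treatment of the centred fluctuation $n^{-1/\alpha}\sum_i\sigma^2(u(t_{i-1},x))(\Xi_i^2-V_i)$. Because you build $\Xi_i=A_i+B_i$ from the \emph{actual} noise on all of $[0,t_i]$, the old-noise part $A_i$ is $\mathcal{F}_{t_{i-1}}$-measurable, hence correlated with the weight $\sigma^2(u(t_{i-1},x))$, with the later weights, and with every $A_j$. This cannot be ignored, since by Plancherel
\begin{equation*}
\mathbb{E}[A_i^2]=\frac{1}{4\pi}\int_{\mathbb{R}}\frac{(1-e^{-\delta|\xi|^\alpha})^2(1-e^{-2t_{i-1}|\xi|^\alpha})}{|\xi|^\alpha}\,d\xi\;\asymp\;\delta^{\frac{\alpha-1}{\alpha}},
\end{equation*}
a fixed positive fraction of $V_i$: the increments of $u$ are not asymptotically independent, and the dominant part of the fluctuation lives in the old noise, not in the new-noise pieces. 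Writing $\Xi_i^2-V_i=(A_i^2-\mathbb{E}A_i^2)+2A_iB_i+(B_i^2-\mathbb{E}B_i^2)$, the last two pieces are martingale differences and converge with room to spare; but $n^{-1/\alpha}\sum_i\sigma^2(u(t_{i-1},x))(A_i^2-\mathbb{E}A_i^2)$ is a sum of $n$ non-martingale terms each of $L^2$-size $O(1/n)$, so Cauchy--Schwarz yields only an $O(1)$ bound, and the Gaussian identity $\mathbb{E}[(A_i^2-\mathbb{E}A_i^2)(A_j^2-\mathbb{E}A_j^2)]=2(\mathbb{E}[A_iA_j])^2$ that would supply the needed decay is unavailable once the non-Gaussian, non-independent weights are inserted. ``Independence of the new-noise pieces $B_i,B_j$'' does not touch this term.

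This is exactly the difficulty the paper's construction exists to remove: it freezes at the earlier time $t(\delta)=t-\delta^{\kappa}$ with $\kappa=\frac{2\alpha}{3\alpha-1}$ and, crucially, replaces the old-noise component of the proxy by a stochastic integral against an \emph{independent} copy $\widetilde{W}$ of the noise. The resulting $\tilde{\Delta}(t_i,\delta)$ has the same Gaussian law as the true increment but is independent of $\sigma\{u(s,\cdot):s\le t_i(\delta)\}$, so the weights decouple from the Gaussian fluctuation and the fourth-moment identity applies; the price is a larger replacement error, and the exponent $\kappa$ is chosen to balance the variance thrown away on $[0,t-\delta^\kappa]$ against the H\"older error on $[t-\delta^\kappa,t+\delta]$. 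To complete your argument you must either introduce such a decoupling device (at which point you are re-deriving Lemma~\ref{le:lemma-estimation}) or control $\sum_i\sigma^2(u(t_{i-1},x))(A_i^2-\mathbb{E}A_i^2)$ by other means, e.g.\ an additional freezing of the weights over blocks or Wiener-chaos estimates. As written, this step is not justified, and with it the claimed rate $n^{(1-\alpha)/(2\alpha)}$ is also unestablished, since that requires the fluctuation term to decay at least as fast as the cross term.
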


\begin{remark}
	The quadratic variation of a process $X(t)$ is defined as a limit in probability 
	\[[X,X](t)=\lim_{n\rightarrow\infty}\sum_{i=1}^{n}(X(t_i)-X(t_{i-1}))^2,\]
	where the limit is taken over an increasing sequence of partitions of $[0, t]$:
	\[0=t_0<t_1<\cdots <t_n=t,\]
	with $\delta_n=\max_{0\leq i\leq n}(t_i-t_{i-1})\rightarrow0.$
	
However, the corresponding limit does not exist as $n$ tends to infinity when applied to our solution. Indeed, taking the case with additive noise as an example, one notices that 
\begin{equation*}
	\sum_{i=1}^{n} (u(t_i,x)-u(t_{i-1},x))^2\rightarrow c_{\alpha,t}\,n^{\frac{1}{\alpha}}\quad \text{in probability}.
\end{equation*}
Motivated by Yan \textit{et al.}~\cite{yan2011generalized}, which introduces the concept of generalized quadratic covariation, i.e. 
\[\left[f\left(B^{H}\right), B^{H}\right]_{t}^{(H)}:=\lim _{\varepsilon \downarrow 0} \frac{1}{\varepsilon^{2 H}} \int_{0}^{t}\left\{f\left(B_{s+\varepsilon}^{H}\right)-f\left(B_{s}^{H}\right)\right\}\left(B_{s+\varepsilon}^{H}-B_{s}^{H}\right) d s^{2 H},\]
where $ f $ is a Borel function and $ B^H $ is a fractional Brownian motion with Hurst index $ 0<H<1/2 $, we call
\begin{equation*}
	\lim\limits_{n\rightarrow\infty}\frac{1}{n^{1/\alpha}}\sum_{i=1}^{n} (u(t_i,x)-u(t_{i-1},x))^2
\end{equation*}
the \textit{normalized quadratic variation} throughout the paper.
\end{remark}

As an application we estimate the drift parameter in the equation. When $\alpha=2$ and $\sigma(u)\equiv 1$, Swanson~\cite{Swanson} obtained the quartic variation of temporal process $t\mapsto u(t,x)$ with $\sigma(u)\equiv 1$.  When $\alpha=2$ and $\sigma(u)$ is a general Lipschitz function, Posp\'{\i}\v{s}il and Tribe~\cite{pospivsil2007parameter} considered the quartic variation of temporal process and quadratic variation of spatial process. When $1<\alpha<2$, Gamain and Tudor ~\cite{gamain2023exact} studied the limit behavior of the spatial quadratic variation of corresponding mild solution but no temporal variation. For further studies on variations of stochastic partial differential equations (SPDEs), we refer to  Cialenco and Huang~\cite{cialenco2020note}, Cui \emph{et al.}~\cite{cui2016temporal}, Torres \emph{et al.}~\cite{torres2014quadratic}, Tudor~\cite{tudor2022stochastic}, Zili and Zougar~~\cite{zili2020exact}, and the references therein.

Let $ G_\alpha(t,x) $ be the Green function of the operator $\Delta_\alpha $, i.e., the fundamental solution to the fractional heat equation. It is also the transition density function of the one-dimensional symmetric $\alpha$-stable L\'evy process which means $\int_\mathbb{R}G_\alpha(t,x)dx=1$. The solution of~(\ref{eq:equation1}) is understood in mild sense, that is,
\begin{equation}\label{eq:solution}
u(t,x)=\int_0^t\int_{\mathbb{R}}G_\alpha(t-r,x-z)\sigma(u(r,z))W(dr,dz).
\end{equation}
for $ t>0,x\in\mathbb{R} $, where the right-hand side is a stochastic integral with respect to the martingale measure and the existence and uniqueness of the mild solution have been shown (see, for example, Walsh ~\cite{Walsh1986}). It is important to note that the solution process $\{u(t,x),t\geq0,x\in\mathbb{R}\} $ is H\"older continuous of order less than $\frac{\alpha-1}{2\alpha}$ in $t$. For a more detailed analysis, we refer to Debbi and Dozzi~\cite{debbi2005solutions}, and Niu and Xie~\cite{niu2010regularity}.

The Green function $ G_\alpha(t,x)$ admits the following properties (see Wu~\cite{wu2011solution} and the detailed proof of (iv) to (vi) can be found in ~Debbi and Dozzi \cite{debbi2005solutions}):
\begin{enumerate}[(i)]
\item $ G_\alpha(t,x) $ is positive, real and symmetric with respect to the space variable $x\in\mathbb{R}$ for every $t>0$.
\item $ G_\alpha $ satisfies the semigroup property, that is, for $ 0<s<t $
\[G_\alpha(t+s,x)=\int_{\mathbb{R}}G_\alpha(t,y)G_\alpha(s,x-y)dy. \] 
\item For all $ x\in\mathbb{R} $ and $ t>0 $, we have
\[ G_\alpha(t,x)=t^{-1/\alpha}G_\alpha(1,xt^{-1/\alpha}), \]
which shows that the Green function enjoys the scaling property.
\item There exists a constant $ c_\alpha $ such that
\[ G_\alpha(1,x)\leq c_\alpha(1+|x|^{1+\alpha})^{-1}. \]
\item For $ 1<\gamma<\alpha+1 $, we have
\[ \int_{0}^{\infty}\int_{\mathbb{R}}|G_\alpha(1+s,z)-G_\alpha(s,z)|^\gamma dzds<\infty. \]
\item For $ \frac{\alpha+1}{2}<\gamma<\alpha+1 $, we have
\[ \int_{0}^{\infty}\int_{\mathbb{R}}|G_\alpha(s,1+z)-G_\alpha(s,z)|^\gamma dzds<\infty.\]
Particularly, we take $ \gamma=2 $, and it satisfies the above inequality because of $ \alpha\in(1,2] $, that is,
\begin{equation*}
\int_{0}^{\infty}\int_{\mathbb{R}}|G_\alpha(s,1+z)-G_\alpha(s,z)|^2 dzds<\infty.
\end{equation*}
\end{enumerate}

Recall that the Fourier transform of $ G_\alpha(t,x) $ can be expressed as 
\begin{equation}\label{eq:fourier trans}
\mathcal{F}G_\alpha(t,\cdot)(\xi)=\exp(-t|\xi|^\alpha).
\end{equation}

Moreover, the mild solution (\ref{eq:solution}) also satisfies the estimate for the $ p $-th moment of the full increment
\begin{equation}\label{eq:full increment}
\mathbb{E}[|u(t,x)-u(s,y)|^p]\leq c_{T,p,\alpha}\left(|t-s|^{\frac{\alpha-1}{2\alpha}p}+|x-y|^{\frac{\alpha-1}{2}p}\right)
\end{equation}
for every $ s,t\in[0,T], x, y\in\mathbb{R} $ and $ p\geq2 $, and the moment bounds
\begin{equation}\label{eq:p-moment}
\sup_{t\in[0,T],x\in\mathbb{R}}\mathbb{E}[|u(t,x)|^p]\leq c_{T,p,\alpha}
\end{equation}
for every $ p\geq1 $. (see again, Debbi and Dozzi \cite{debbi2005solutions}.)

For simplicity, throughout this paper, we denote a positive constant that depends solely on $ \alpha $ as $ C_\alpha $ and it should be noted that the value of this constant may vary in different contexts.

\section{Linear Case}
We first consider the case where $\sigma\equiv1$, in which case the corresponding mild solution to (\ref{eq:equation1}) can be written as
\[u_0(t,x)=\int_0^t\int_{\mathbb{R}}G_\alpha(t-s,x-z)W(ds,dz).\]
For all $ t>0 $ and $x\in\mathbb{R}^d$, we find that
\[ \mathbb{E}u_0^2(t,x)=c_{d,\alpha}\int_{0}^{t}u^{-d/\alpha}du, \]
and this integral is finite if and only if $ d<\alpha $, which is equivalent to $ d=1 $ given that $ \alpha\in(1,2] $. Thus, the existence of the mild solution is guaranteed only in spatial dimension $ d = 1 $.

In this section, we focus on analyzing the variation of the temporal process for a fixed $ x\in\mathbb{R} $ and a finite time interval $ [T_1, T_2] $. Let
$$ \Delta(t,\delta)=u_0(t+\delta,x)-u_0(t,x),\qquad \delta=\frac{T_2-T_1}{n},$$
denote the temporal increment associated with the uniform partition.
 Moreover, we investigate certain technical estimates associated with the mild solution. For $ s,t>0,x\in\mathbb{R} $, we have the following expression:
\begin{align}
\mathbb{E}u_0(t,x)u_0(s,x)&=\int_{0}^{s\wedge t}\int_{\mathbb{R}}G_\alpha(t-r,x-z)G_\alpha(s-r,x-z)dzdr\notag\\
&=\frac{c_\alpha}{2}
\left[
(t+s)^{\frac{\alpha-1}{\alpha}}
-
|t-s|^{\frac{\alpha-1}{\alpha}}
\right],\label{eq:cov}
\end{align}
where
\begin{equation}\label{eq:c-alpha}
	c_\alpha=\frac{\Gamma(1/\alpha)}{\pi(\alpha-1)}.
\end{equation}
It follows from the covariance formula that the centered Gaussian process
\(\{u_0(t,x),t\ge0\}\) has the same finite-dimensional distributions as
\[
\left(
\frac{c_\alpha}
{2^{1/\alpha}}
\right)^{1/2} B_t^{1/2,K},
\qquad K=\frac{\alpha-1}{\alpha},
\]
where \(\{B_t^{H,K},t\ge0\}\) is a bifractional Brownian motion.
Indeed, 
\[
\mathbb E[B_t^{H,K}B_s^{H,K}]
=
\frac{1}{2^K}
\left[
(t^{2H}+s^{2H})^K-|t-s|^{2HK}
\right].
\]
For \(H=1/2\) and \(K=(\alpha-1)/\alpha\), we get
\[
\begin{aligned}
	&\mathbb E\left[
	\left(\frac{c_\alpha}{2^{1/\alpha}}\right)^{1/2}B_t^{1/2,K}
	\left(\frac{c_\alpha}{2^{1/\alpha}}\right)^{1/2}B_s^{1/2,K}
	\right] \\
	&\quad =
	\frac{c_\alpha}{2^{1/\alpha}}
	\cdot
	\frac{1}{2^K}
	\left[
	(t+s)^K-|t-s|^K
	\right].
\end{aligned}
\]
Since \(K=(\alpha-1)/\alpha\), we have \(2^{-1/\alpha}2^{-K}=2^{-1}\). Therefore,
\[
\mathbb E\left[
\left(\frac{c_\alpha}{2^{1/\alpha}}\right)^{1/2}B_t^{1/2,K}
\left(\frac{c_\alpha}{2^{1/\alpha}}\right)^{1/2}B_s^{1/2,K}
\right]
=
\frac{c_\alpha}{2}
\left[
(t+s)^K-|t-s|^K
\right],
\]
which coincides with the covariance of \(u_0(\cdot,x)\). We refer to Houdr\'{e} and Villa \cite{houdre2003example} and Russo, and Tudor \cite{russo2006bifractional} for more details for the bifractional Brownian motion.

Consequently, we find that
\[
\mathbb{E}u_0^2(t,x)
=
\int_{0}^{t}\int_{\mathbb{R}}G_\alpha^2(t-r,x-z)\,dzdr
=
\frac{c_\alpha}{2^{1/\alpha}}t^{\frac{\alpha-1}{\alpha}},
\]
with \(c_\alpha\) given by \eqref{eq:c-alpha}.

Therefore, for \(0<s<t\), we have
\begin{align*}
	\mathbb{E}\left[(u_0(t,x)-u_0(s,x))^2\right]
	&=
	\mathbb{E}u_0^2(t,x)
	-2\mathbb{E}[u_0(t,x)u_0(s,x)]
	+\mathbb{E}u_0^2(s,x)\\
	&=
	c_\alpha\left[
	2^{-1/\alpha}
	\left(
	t^{\frac{\alpha-1}{\alpha}}
	+
	s^{\frac{\alpha-1}{\alpha}}
	\right)
	-
	\left(
	(t+s)^{\frac{\alpha-1}{\alpha}}
	-
	(t-s)^{\frac{\alpha-1}{\alpha}}
	\right)
	\right].
\end{align*}
In particular, taking \(t+\delta\) and \(t\) in place of \(t\) and \(s\), respectively, we obtain
\begin{align}
	\mathbb{E}\left[\Delta^2(t,\delta)\right]
	&=
	c_\alpha\left[
	2^{-1/\alpha}
	\left(
	(t+\delta)^{\frac{\alpha-1}{\alpha}}
	+
	t^{\frac{\alpha-1}{\alpha}}
	\right)
	-
	(2t+\delta)^{\frac{\alpha-1}{\alpha}}
	+
	\delta^{\frac{\alpha-1}{\alpha}}
	\right] \notag\\
	&=
	c_\alpha\delta^{\frac{\alpha-1}{\alpha}}
	+
	O(\delta^2),
	\qquad \delta\to0.
	\label{eq:torder2}
\end{align}
More precisely, there exists a constant \(C_{\alpha,t}>0\) such that, for all sufficiently small
\(\delta>0\),
\begin{equation}\label{eq:order2-error}
	\left|
	\mathbb E\left[\Delta^2(t,\delta)\right]
	-
	c_\alpha\delta^\frac{\alpha-1}{\alpha}
	\right|
	\le
	C_{\alpha,t}\,\delta^2.
\end{equation}

Since \(\Delta(t,\delta)\) is a centered Gaussian random variable, it follows that
\begin{equation}\label{eq:t4order}
	\mathbb{E}\left[\Delta^4(t,\delta)\right]
	=
	3c_\alpha^2\delta^{\frac{2\alpha-2}{\alpha}}
	+
	O\left(\delta^{\frac{3\alpha-1}{\alpha}}\right).
\end{equation}

\begin{lemma}\label{lem:temporal-off-diagonal}
	For every \(\alpha\in(1,2]\), there exists a constant
	\(C_{\alpha}>0\) such that, for all \(0\le s<t\) and \(0<\delta\le t-s\),
	\[
	\left|
	\mathbb E\left[\Delta(t,\delta)\Delta(s,\delta)\right]
	\right|
	\le
	C_{\alpha}\,\delta^2(t-s)^{\frac{\alpha-1}{\alpha}-2}.
	\]
\end{lemma}

\begin{proof}
	By \eqref{eq:cov}, we have
	\begin{align*}
		\mathbb E\left[\Delta(t,\delta)\Delta(s,\delta)\right]
		&=
		\mathbb E\left[
		(u_0(t+\delta,x)-u_0(t,x))
		(u_0(s+\delta,x)-u_0(s,x))
		\right] \\
		&=
		\frac{c_\alpha}{2}
		\Big[
		(t+s+2\delta)^{\frac{\alpha-1}{\alpha}}
		-2(t+s+\delta)^{\frac{\alpha-1}{\alpha}}
		+(t+s)^{\frac{\alpha-1}{\alpha}}
		\Big] \\
		&\qquad
		+
		\frac{c_\alpha}{2}\Big[
		(t-s+\delta)^{\frac{\alpha-1}{\alpha}}
		+(t-s-\delta)^{\frac{\alpha-1}{\alpha}}
		-2(t-s)^{\frac{\alpha-1}{\alpha}}
		\Big].
	\end{align*}
	
	We estimate the two brackets separately. 
	Taylor's formula gives
	\begin{align*}
		\left|
		(t+s+2\delta)^{\frac{\alpha-1}{\alpha}}
		-2(t+s+\delta)^{\frac{\alpha-1}{\alpha}}
		+(t+s)^{\frac{\alpha-1}{\alpha}}
		\right|
		&\le
		C_\alpha \delta^2 (t+s)^{\frac{\alpha-1}{\alpha}-2}\\
		&\leq C_\alpha \delta^2 (t-s)^{\frac{\alpha-1}{\alpha}-2},
	\end{align*}
	since $\frac{\alpha-1}{\alpha}-2<0$.
	
		For the second bracket, we consider two cases. If \(t-s\ge 2\delta\), the same estimate holds; that is
	\begin{equation*}
		\left|
		(t-s+\delta)^{\frac{\alpha-1}{\alpha}}
		+
		(t-s-\delta)^{\frac{\alpha-1}{\alpha}}
		-
		2(t-s)^{\frac{\alpha-1}{\alpha}}
		\right|\le
		C_\alpha \delta^2 (t-s)^{\frac{\alpha-1}{\alpha}-2}.
	\end{equation*}
	
	If \(\delta\le t-s<2\delta\), then
	\begin{equation*}
		\left|
		(t-s+\delta)^{\frac{\alpha-1}{\alpha}}
		+
		(t-s-\delta)^{\frac{\alpha-1}{\alpha}}
		-
		2(t-s)^{\frac{\alpha-1}{\alpha}}
		\right|\le
		C_\alpha (t-s)^{\frac{\alpha-1}{\alpha}}.
	\end{equation*}
	On the other hand, 
	\[
	\delta^2(t-s)^{\frac{\alpha-1}{\alpha}-2}
	=
	(t-s)^{\frac{\alpha-1}{\alpha}}
	\left(\frac{\delta}{t-s}\right)^2
	\ge
	\frac14 (t-s)^{\frac{\alpha-1}{\alpha}}.
	\]
	Thus,
	\[
	(t-s)^{\frac{\alpha-1}{\alpha}}
	\le
	4\delta^2(t-s)^{\frac{\alpha-1}{\alpha}-2}.
	\]
	Hence, in this case also,
	\begin{equation*}
		\left|
		(t-s+\delta)^{\frac{\alpha-1}{\alpha}}
		+
		(t-s-\delta)^{\frac{\alpha-1}{\alpha}}
		-
		2(t-s)^{\frac{\alpha-1}{\alpha}}
		\right|\le
		C_\alpha \delta^2 (t-s)^{\frac{\alpha-1}{\alpha}-2}.
	\end{equation*}
	
	Combining the estimates for the two brackets, we obtain
	\[
	\left|
	\mathbb E\left[\Delta(t,\delta)\Delta(s,\delta)\right]
	\right|
	\le
	C_\alpha \delta^2(t-s)^{\frac{\alpha-1}{\alpha}-2}.
	\]
	This completes the proof.
\end{proof}

\begin{theorem}\label{th:temporal variation}
Fix $ 0<T_1<T_2<\infty $ and $ x\in\mathbb{R} $. Define a time grid $ t_i=T_1+i\delta $, $ i=0,1,\dots,n $, where $ \delta=(T_2-T_1)/n $. Then the following limit holds in mean square:
\begin{equation*}
\lim\limits_{n\rightarrow\infty}\frac{1}{n^{1/\alpha}}\sum_{i=1}^{n}(u_0(t_i,x)-u_0(t_{i-1},x))^2=c_\alpha(T_2-T_1)^{\frac{\alpha-1}{\alpha}},
\end{equation*}
where $c_\alpha$ is given by \eqref{eq:c-alpha}.

Moreover, there exists a constant \(C_{\alpha,T_1}>0\) such that, for all sufficiently large \(n\),
\begin{multline*}
	\mathbb E\left|
	\frac{1}{n^{1/\alpha}}\sum_{i=1}^n 
	(u_0(t_i,x)-u_0(t_{i-1},x))^2
	-
	c_\alpha (T_2-T_1)^{\frac{\alpha-1}{\alpha}}
	\right|^2\\
	\le
	C_{\alpha,T_1}
	\left[
	(T_2-T_1)^{\frac{2\alpha-2}{\alpha}}n^{-1}
	+
	(T_2-T_1)^4 n^{-2-\frac2\alpha}
	\right].
\end{multline*}
\end{theorem}
\begin{proof}
		Recall that
		\[
		\Delta(t_{i-1},\delta)
		=
		u_0(t_i,x)-u_0(t_{i-1},x),
		\qquad i=1,\ldots,n.
		\]
		It is enough to prove that
		\[
		\frac{1}{n^{2/\alpha}}
		\sum_{i,j=1}^{n}
		\mathbb E\left[
		\left(
		\Delta^2(t_{i-1},\delta)
		-
		c_\alpha\delta^{\frac{\alpha-1}{\alpha}}
		\right)
		\left(
		\Delta^2(t_{j-1},\delta)
		-
		c_\alpha\delta^{\frac{\alpha-1}{\alpha}}
		\right)
		\right]
		\longrightarrow0, \quad \text{as} \quad n\to \infty.
		\]
		
		Since $\Delta(t_{i-1},\delta)$ and $\Delta(t_{j-1},\delta)$ are jointly centered Gaussian random variables, Isserlis' formula gives
		\begin{align*}
			&\mathbb E\left[
			\left(
			\Delta^2(t_{i-1},\delta)
			-
			c_\alpha\delta^{\frac{\alpha-1}{\alpha}}
			\right)
			\left(
			\Delta^2(t_{j-1},\delta)
			-
			c_\alpha\delta^{\frac{\alpha-1}{\alpha}}
			\right)
			\right]\\
			&\quad =
			\left(
			\mathbb E\Delta^2(t_{i-1},\delta)
			-
			c_\alpha\delta^{\frac{\alpha-1}{\alpha}}
			\right)
			\left(
			\mathbb E\Delta^2(t_{j-1},\delta)
			-
			c_\alpha\delta^{\frac{\alpha-1}{\alpha}}
			\right)\\
			&\qquad
			+2\left(
			\mathbb E[
			\Delta(t_{i-1},\delta)\Delta(t_{j-1},\delta)]
			\right)^2.
		\end{align*}
		Hence the desired second moment is bounded by the sum of the following two terms:
		\begin{align*}
			A_n
			&:=
			\frac{1}{n^{2/\alpha}}
			\sum_{i,j=1}^{n}
			\left|
			\mathbb E\Delta^2(t_{i-1},\delta)
			-
			c_\alpha\delta^{\frac{\alpha-1}{\alpha}}
			\right|
			\left|
			\mathbb E\Delta^2(t_{j-1},\delta)
			-
			c_\alpha\delta^{\frac{\alpha-1}{\alpha}}
			\right|,
		\end{align*}
		and
		\[
		B_n
		:=
		\frac{2}{n^{2/\alpha}}
		\sum_{i,j=1}^{n}
		\left(
		\mathbb E[
		\Delta(t_{i-1},\delta)\Delta(t_{j-1},\delta)]
		\right)^2.
		\]
		
		We first estimate \(A_n\). By the second-moment estimate, uniformly for
		\(t_{i-1}\in[T_1,T_2]\),
		\[
		\left|
		\mathbb E\Delta^2(t_{i-1},\delta)
		-
		c_\alpha\delta^{\frac{\alpha-1}{\alpha}}
		\right|
		\le
		C_{\alpha,T_1}\delta^2.
		\]
		Therefore,
		\[
		A_n
		\le
		\frac{C_{\alpha,T_1}}{n^{2/\alpha}}
		n^2\delta^4
		=
		C_{\alpha,T_1}(T_2-T_1)^4
		n^{-2-\frac2\alpha}.
		\]
		In particular, \(A_n\to0\) as $n\to \infty$.
		
		Now we estimate \(B_n\). Split \(B_n\) into the diagonal part and the off-diagonal part:
		\[
		B_n
		=
		\frac{2}{n^{2/\alpha}}
		\sum_{i=1}^{n}
		\left(
		\mathbb E\Delta^2(t_{i-1},\delta)
		\right)^2
		+
		\frac{4}{n^{2/\alpha}}
		\sum_{1\le j<i\le n}
		\left(
		\mathbb E[
		\Delta(t_{i-1},\delta)\Delta(t_{j-1},\delta)]
		\right)^2.
		\]
		
		By \eqref{eq:torder2} and the uniform version of \eqref{eq:order2-error},
			\[
			\mathbb E\Delta^2(t_{i-1},\delta)
			\le
			c_\alpha\delta^{\frac{\alpha-1}{\alpha}}
			+
			C_{\alpha,T_1}\delta^2.
			\]
			Since \(0<\delta\le1\) and \((\alpha-1)/\alpha<2\), we have
			\(\delta^2\le \delta^{(\alpha-1)/\alpha}\). Then	we get
		\[
		\frac{2}{n^{2/\alpha}}
		\sum_{i=1}^{n}
		\left(
		\mathbb E\Delta^2(t_{i-1},\delta)
		\right)^2
		\le
		C_{\alpha,T_1}
		n^{-\frac2\alpha}n\delta^{\frac{2\alpha-2}{\alpha}}=C_{\alpha,T_1}
		(T_2-T_1)^{\frac{2\alpha-2}{\alpha}}n^{-1},
		\]
		since \(\delta=(T_2-T_1)/n\).
		
		For the off-diagonal part, by Lemma \ref{lem:temporal-off-diagonal}, for \(i>j\),
		\[
		\left|
		\mathbb E[
		\Delta(t_{i-1},\delta)\Delta(t_{j-1},\delta)]
		\right|
		\le
		C_{\alpha}\,
		\delta^{\frac{\alpha-1}{\alpha}}
		|i-j|^{\frac{\alpha-1}{\alpha}-2}.
		\]
		Hence
		\begin{align*}
			&\frac{4}{n^{2/\alpha}}
			\sum_{1\le j<i\le n}
			\left(
			\mathbb E[
			\Delta(t_{i-1},\delta)\Delta(t_{j-1},\delta)]
			\right)^2\\
			&\le
			C_{\alpha}
			n^{-\frac2\alpha}
			\delta^{\frac{2\alpha-2}{\alpha}}
			\sum_{1\le j<i\le n}
			|i-j|^{\frac{2\alpha-2}{\alpha}-4}\\
			&= C_{\alpha}
			n^{-\frac2\alpha}
			\delta^{\frac{2\alpha-2}{\alpha}}\sum_{k=1}^{n-1}
			(n-k)k^{\frac{2\alpha-2}{\alpha}-4}\\
			&\leq C_{\alpha}
			n^{-\frac2\alpha}
			\delta^{\frac{2\alpha-2}{\alpha}}n\sum_{k=1}^{\infty}
			k^{\frac{2\alpha-2}{\alpha}-4}\\
			&\leq C_{\alpha}
			(T_2-T_1)^{\frac{2\alpha-2}{\alpha}}n^{-1},
		\end{align*}
	since 
	$	\sum_{k=1}^{\infty}
		k^{\frac{2\alpha-2}{\alpha}-4}<\infty.$

		Combining the estimates for \(A_n\) and \(B_n\), we obtain
		\[
		\mathbb E\left|
		\frac{1}{n^{1/\alpha}}\sum_{i=1}^{n}\Delta^2(t_{i-1},\delta)
		-
		c_\alpha(T_2-T_1)^{\frac{\alpha-1}{\alpha}}
		\right|^2
		\le
		C_{\alpha,T_1}
		\left[
		(T_2-T_1)^{\frac{2\alpha-2}{\alpha}}n^{-1}
		+
		(T_2-T_1)^4 n^{-2-\frac2\alpha}
		\right],
		\]
		which tends to zero as $n\to \infty$.
		 This proves the desired mean-square convergence.
\end{proof}
% % % % % % % % % % % %
% % % % % % % % % % % %
% % % % % % % % % % % %
\section{Non-linear Case}
In this section, we investigate the nonlinear fractional heat equation given by
  \begin{equation}
  \left\{
  \begin{aligned}
  &\frac{\partial}{\partial t}u=\Delta_\alpha u+\sigma(u)\frac{\partial^2}{\partial t\partial x}W,\quad t\geq0,x\in\mathbb{R}\\
  &u(0,x)=0,
  \end{aligned}
  \right.
  \end{equation}
where the coefficient $u\mapsto \sigma(u)$ is a Lipschitz function. In this section, we adopt a Fourier-analytic approach. The mild solution of the above equation can be expressed as
\begin{equation}\label{eq:solution2}
u(t,x)=\int_0^t\int_{\mathbb{R}}G_\alpha(t-r,x-z)\sigma(u(r,z))W(dr,dz) .
\end{equation}

Fix $ x\in\mathbb{R} $ and define the temporal increment as follows
\[ \bar{\Delta}(t,\delta)=u(t+\delta,x)-u(t,x) .\]
%Therefore, we approximate the temporal increment by $ \sigma(u(t(\delta),x))\widetilde{\Delta}(t,\delta) $, 
Let \(\widetilde W\) be an independent copy of \(W\). We define the modified temporal increment by
\begin{align*}
	\widetilde{\Delta}(t,\delta)
	&:=
	\int_0^{t(\delta)}\int_{\mathbb R}
	\left[
	G_\alpha(t+\delta-r,x-z)
	-
	G_\alpha(t-r,x-z)
	\right]\widetilde W(dr,dz)\\
	&\quad+
	\int_{t(\delta)}^{t+\delta}\int_{\mathbb R}
	\left[
	G_\alpha(t+\delta-r,x-z)
	-
	G_\alpha(t-r,x-z)
	\right]W(dr,dz),
\end{align*}
where $ t(\delta)=t-\delta^{\kappa} ,$ 
\begin{equation}\label{eq:kappa}
	\text{with}\quad\kappa=\frac{2\alpha}{3\alpha-1},
\end{equation}
for sufficiently small $\delta$, $t(\delta)\geq0$ uniformly for $t\in[T_1,T_2]$

%We need to define the "modified temporal increment": denote
%\begin{align*}
%\widetilde{\Delta}(t,\delta):=\int_0^{t(\delta)}&\int_{\mathbb{R}}(G_\alpha(t+\delta-r,x-z)-G_\alpha(t-r,x-z))\widetilde{W}(dr,dz)\\
%&+\int_{t(\delta)}^{t+\delta}\int_{\mathbb{R}}(G_\alpha(t+\delta-r,x-z)-G_\alpha(t-r,x-z))W(dr,dz)
%\end{align*}
%where $ t(\delta)=t-\delta^{\kappa} ,$ 
%\begin{equation}\label{eq:kappa}
%	\text{with}\quad\kappa=\frac{2\alpha}{3\alpha-1},
%\end{equation}
%for sufficiently small $\delta$, $t(\delta)\geq0$ uniformly for $t\in[T_1,T_2]$ with $ \widetilde{W}(dr,dz) $ representing an independent space-time white noise.
 Note that $G_\alpha(r,\cdot)=0$ for $r\leq 0$. When $\alpha=2$, the value of $\kappa$ reduces to $4/5$, which corresponds to the stochastic heat equation case in \cite{pospivsil2007parameter}. It is important to note that $ \widetilde{\Delta}(t,\delta) $ follows the same Gaussian distribution as the linear increment $ \Delta(t,\delta):=u_0(t+\delta,x)-u_0(t,x) $ and is independent of $\mathcal{F}_{t(\delta)}$,
where
\[\mathcal{F}_t = \sigma \{ W([0, r] \times A) : 0 \le r \le t, \ A \in \mathcal{B}_b(\mathbb{R}) \}.\]
%$ \sigma\{u(s,\cdot) : s\leq t(\delta)\} .$
% The value of $ t(\delta) $ is chosen to optimize the estimate in Theorem~\ref{th:temporal variation2}, which demonstrates the validity of the approximation.
 
The following result will be used repeatedly in the sequel. It provides an \(L^2(\Omega)\)-approximation of the nonlinear temporal increment by a frozen-coefficient modified Gaussian increment associated with the additive equation.
\begin{lemma}\label{le:lemma-estimation}
For any $ T>0 $, there exists $ c_{\alpha,T}<\infty $, such that
\begin{equation}\label{eq:le-esti}
\mathbb{E}[|\bar{\Delta}(t,\delta)-\sigma(u(t(\delta),x))\widetilde{\Delta}(t,\delta)|^2]\leq c_{\alpha,T}\delta^{\frac{4(\alpha-1)}{3\alpha-1}}
\end{equation}
holds for all $ x\in\mathbb{R}, 0\leq\delta^{\kappa}\leq t\wedge 1 $ and $ t\leq T. $
\end{lemma}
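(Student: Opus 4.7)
The plan is to decompose the residual $\bar\Delta(t,\delta) - \sigma(u(t(\delta),x))\tilde\Delta(t,\delta)$ into three stochastic integrals according to whether the time variable lies in the ``early'' window $[0,t(\delta)]$ or the ``late'' window $[t(\delta),t+\delta]$, and whether the driving noise is $W$ or $\widetilde W$. Explicitly, with the convention $G_\alpha(t-r,\cdot)=0$ for $r>t$, write
\begin{align*}
I_1 &= \int_0^{t(\delta)}\!\!\int_{\mathbb{R}}[G_\alpha(t+\delta-r,x-z)-G_\alpha(t-r,x-z)]\sigma(u(r,z))\,W(dz,dr),\\
I_2 &= \sigma(u(t(\delta),x))\int_0^{t(\delta)}\!\!\int_{\mathbb{R}}[G_\alpha(t+\delta-r,x-z)-G_\alpha(t-r,x-z)]\,\widetilde{W}(dz,dr),\\
I_3 &= \int_{t(\delta)}^{t+\delta}\!\!\int_{\mathbb{R}}[G_\alpha(t+\delta-r,x-z)-G_\alpha(t-r,x-z)][\sigma(u(r,z))-\sigma(u(t(\delta),x))]\,W(dz,dr),
\end{align*}
so that $\bar\Delta-\sigma(u(t(\delta),x))\tilde\Delta = I_1-I_2+I_3$; by the elementary inequality $|a-b+c|^2\leq 3(a^2+b^2+c^2)$ it then suffices to prove $\mathbb{E}|I_j|^2\leq c_{\alpha,T}\delta^{4(\alpha-1)/(3\alpha-1)}$ for each $j$.

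For the early pieces $I_1$ and $I_2$ the plan is to apply the It\^o isometry for martingale measures together with the uniform moment bound (\ref{eq:p-moment}) on $\sigma(u)$ (via the Lipschitz hypothesis). For $I_2$ one first conditions on the $W$-filtration at time $t(\delta)$, on which $\sigma(u(t(\delta),x))$ is measurable, and then exploits the independence of $\widetilde W$. Both arguments yield $\mathbb{E}|I_j|^2\leq c_{\alpha,T}\mathcal{J}$ with
\[
\mathcal{J}=\int_0^{t(\delta)}\!\!\int_{\mathbb{R}}[G_\alpha(t+\delta-r,x-z)-G_\alpha(t-r,x-z)]^2\,dz\,dr = \int_{\delta^\kappa}^{t}\!\!\int_{\mathbb{R}}[G_\alpha(s+\delta,y)-G_\alpha(s,y)]^2\,dy\,ds.
\]
Plancherel together with the Fourier identity (\ref{eq:fourier trans}) and performing the $s$-integral first give
\[
\mathcal{J}\leq \frac{1}{4\pi}\int_{\mathbb{R}}\frac{e^{-2\delta^\kappa|\xi|^\alpha}(1-e^{-\delta|\xi|^\alpha})^2}{|\xi|^\alpha}\,d\xi.
\]
The rescaling $\eta=\delta^{\kappa/\alpha}\xi$ extracts a prefactor $\delta^{\kappa-\kappa/\alpha}$, and the elementary bound $(1-e^{-x})^2\leq x^2$ pulls out a further factor $\delta^{2(1-\kappa)}$, leading to $\mathcal{J}\leq c_\alpha\delta^{2-\kappa(\alpha+1)/\alpha}$; substituting (\ref{eq:kappa}) gives exactly $\delta^{4(\alpha-1)/(3\alpha-1)}$.

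For the late piece $I_3$ the plan is to use the It\^o isometry, the Lipschitz property of $\sigma$, and the full-increment bound (\ref{eq:full increment}) with $p=2$ to obtain
\[
\mathbb{E}|I_3|^2\leq c_{\alpha,T}\!\int_{t(\delta)}^{t+\delta}\!\!\int_{\mathbb{R}}\![G_\alpha(t+\delta-r,x-z)-G_\alpha(t-r,x-z)]^2\bigl(|r-t(\delta)|^{\frac{\alpha-1}{\alpha}}+|z-x|^{\alpha-1}\bigr)dz\,dr.
\]
For the temporal contribution, $|r-t(\delta)|\leq \delta+\delta^\kappa\leq 2\delta^\kappa$ factors out a constant multiple of $\delta^{\kappa(\alpha-1)/\alpha}$, while the remaining plain $L^2$-integral, split at $r=t$ and bounded via Plancherel and the scaling property (iii) of $G_\alpha$, is $O(\delta^{(\alpha-1)/\alpha})$; the product $\delta^{(\kappa+1)(\alpha-1)/\alpha}$ is smaller than the target for $\alpha\in(1,2)$. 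For the spatial contribution, the scaling property yields $\int_{\mathbb{R}}G_\alpha(u,y)^2|y|^{\alpha-1}dy=c u^{(\alpha-2)/\alpha}$, and integrating over $u\in(0,\delta^\kappa+\delta)$ gives exactly $c\delta^{2\kappa(\alpha-1)/\alpha}=c\delta^{4(\alpha-1)/(3\alpha-1)}$.

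The hardest part will be the Fourier/scaling computation for $\mathcal J$ and the recognition that (\ref{eq:kappa}) is precisely the value of $\kappa$ that \emph{balances} the two competing scales: the early-time contribution behaves like $\delta^{2-\kappa(\alpha+1)/\alpha}$, whereas the late-time contribution behaves like $\delta^{2\kappa(\alpha-1)/\alpha}$; equating these two exponents singles out $\kappa=2\alpha/(3\alpha-1)$ with common value $4(\alpha-1)/(3\alpha-1)$. Once this balancing is in place, every remaining step is a routine application of Plancherel, the scaling property (iii), and the moment bound (\ref{eq:p-moment}).
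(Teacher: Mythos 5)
Your proposal is correct and follows essentially the same route as the paper: the identical three-term decomposition (early window under $W$, early window under $\widetilde W$, late window carrying the Lipschitz increment of $\sigma$), It\^o isometry plus Plancherel and the Fourier formula for the early pieces, and the full-increment bound \eqref{eq:full increment} together with the scaling property (iii) for the late piece. The only (harmless) deviations are cosmetic: you retain the cancellation in the plain $L^2$-integral over the late window, obtaining the slightly sharper intermediate exponent $(\kappa+1)(\alpha-1)/\alpha$ where the paper simply uses $2\kappa(\alpha-1)/\alpha$, and your closing observation that $\kappa=2\alpha/(3\alpha-1)$ balances the exponents $2-\kappa(\alpha+1)/\alpha$ and $2\kappa(\alpha-1)/\alpha$ is exactly the optimization the paper alludes to when choosing $t(\delta)$.
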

\begin{proof}
We split the difference into three parts:
\begin{align*}
&\qquad\bar{\Delta}(t,\delta)-\sigma(u(t(\delta),x))\widetilde{\Delta}(t,\delta)\\
&=\int_0^{t(\delta)} \int_{\mathbb{R}}\left(G_{\alpha}(t+\delta-r, x-z)-G_{\alpha}(t-r, x-z)\right) \sigma(u(r,z))W(dr,dz) \\
&\quad+\int_{t(\delta)}^{t+\delta} \int_{\mathbb{R}}\left(G_{\alpha}(t+\delta-r, x-z)-G_{\alpha}(t-r, x-z)\right) (\sigma(u(r,z))-\sigma(u(t(\delta),x))) W(dr,dz) \\
&\quad-\int_{0}^{t(\delta)} \int_{\mathbb{R}}\left(G_{\alpha}(t+\delta-r, x-z)-G_{\alpha}(t-r, x-z)\right) \sigma(u(t(\delta), x)) \widetilde{W}(dr,dz) \\
&:=I_1+I_2-I_3
\end{align*}
for all $x\in\mathbb{R}$, $0\leq\delta^{\kappa}\leq t\wedge 1$ and $t\leq T$. Consequently, we have
\begin{align*}
\mathbb{E}[|\bar{\Delta}(t,\delta)-\sigma(u(t(\delta),x))\widetilde{\Delta}(t,\delta)|^2]&=\mathbb{E}[|I_1+I_2-I_3|^2]\\
&\leq 4\mathbb{E}|I_1|^2+4\mathbb{E}|I_2|^2+2\mathbb{E}|I_3|^2.
\end{align*}
We now estimate each term in the above equation, individually. It follows from the Lipschitz condition of $ \sigma(u) $, It\^o isometry, (\ref{eq:p-moment}) and Parseval-Plancherel identity that
\begin{align*}
\mathbb{E}|I_1|^2&=\int_0^{t(\delta)} \int_{\mathbb{R}}\left(G_{\alpha}(t+\delta-r, x-z)-G_{\alpha}(t-r, x-z)\right)^2 \mathbb{E}[\sigma^2(u)]dzdr \\
&\leq \frac{c_{\alpha,T}}{2\pi}\int_0^{t(\delta)} \int_{\mathbb{R}}\left|\mathcal{F}G_{\alpha}(t+\delta-r, \cdot)(\xi)-\mathcal{F}G_{\alpha}(t-r, \cdot)(\xi)\right|^2d\xi dr.
\end{align*}
From (\ref{eq:fourier trans}), we know that 
\begin{eqnarray*}
\mathbb{E}|I_1|^2&\leq& \frac{c_{\alpha,T}}{2\pi}\int_0^{t(\delta)} \int_{\mathbb{R}}\left|e^{-(t+\delta-r)|\xi|^\alpha}-e^{-(t-r)|\xi|^\alpha}\right|^2\,d\xi\, dr\\
&=&\frac{c_{\alpha,T}}{2\pi}\int_0^{t(\delta)} \int_{\mathbb{R}}e^{-2(t-r)|\xi|^\alpha}(1-e^{-\delta|\xi|^\alpha})^2\,d\xi \,dr\\
&=&\frac{c_{\alpha,T}}{2\pi}\int_{\mathbb{R}}\frac{(1-e^{-\delta|\xi|^\alpha})^2}{2|\xi|^\alpha}e^{-2\delta^{\kappa}|\xi|^\alpha}\left(1-e^{-2|\xi|^\alpha\left(t-\delta^{\kappa}\right)}\right)\,d\xi.
\end{eqnarray*}
Then by using the fact that $1-e^{-x}\leq 1\wedge x$ for all $x\geq 0$ we derive that
\begin{equation*}
	\mathbb{E}|I_1|^2\leq \frac{c_{\alpha,T}}{2\pi}\int_0^\infty\delta^2\xi^\alpha e^{-2\delta^{\kappa}\xi^\alpha}\,d\xi\,=\,c_{\alpha,T}\delta^{2-\frac{\kappa(\alpha+1)}{\alpha}}\,=\,c_{\alpha,T}\delta^{\frac{4(\alpha-1)}{3\alpha-1}}
\end{equation*}
for all $x\in\mathbb{R},  0\leq\delta^{\kappa}\leq t\wedge 1$ and $t\leq T$.

The estimate for \(I_3\) follows in the same way, using the independence of
\(\widetilde W\) and \(\mathcal F_{t(\delta)}\), together with the moment bound
\(\sup_{s\le T,x\in\mathbb R}\mathbb E|\sigma(u(s,x))|^2<\infty\). Then we have
\begin{equation*}
\mathbb{E}|I_3|^2\leq c_{\alpha,T}\delta^{\frac{4(\alpha-1)}{3\alpha-1}},
\end{equation*}
for all $x\in\mathbb{R}, 0\leq\delta^{\kappa}\leq t\wedge 1$ and $t\leq T$. 

For the term $ \mathbb{E}|I_2|^2 $, it can be written as
\[
\int_{t(\delta)}^{t+\delta} \int_{\mathbb{R}}\left(G_{\alpha}(t+\delta-r, x-z)-G_{\alpha}(t-r, x-z)\right)^2 \mathbb{E}(\sigma(u)-\sigma(u(t(\delta),x)))^2dzdr .
\] 
It follows from (\ref{eq:full increment}) that
\begin{multline*}
\mathbb{E}|I_2|^2\leq c_{\alpha,T}\int_{t(\delta)}^{t+\delta}\int_{\mathbb{R}}\left(G_{\alpha}(t+\delta-r, x-z)-G_{\alpha}(t-r, x-z)\right)^2 \left(\delta^{\frac{\kappa(\alpha-1)}{\alpha}}+|x-z|^{\alpha-1}\right) dzdr\\
\leq c_{\alpha,T}\int_{t(\delta)}^{t+\delta}\int_{\mathbb{R}}G_{\alpha}(t+\delta-r, x-z)^2\left(\delta^{\frac{\kappa(\alpha-1)}{\alpha}}+|x-z|^{\alpha-1}\right) dzdr\\
+c_{\alpha,T}\int_{t(\delta)}^{t}\int_{\mathbb{R}}G_{\alpha}(t-r, x-z)^2\left(\delta^{\frac{\kappa(\alpha-1)}{\alpha}}+|x-z|^{\alpha-1}\right) dzdr
\end{multline*}
In fact, the estimations for these two terms are similar, and their results are identical; therefore, the proof of the latter is omitted. Hence, we write
\begin{align*}
	&\int_{t(\delta)}^{t+\delta}\int_{\mathbb{R}}G_{\alpha}(t+\delta-r, x-z)^2\left(\delta^{\frac{\kappa(\alpha-1)}{\alpha}}+|x-z|^{\alpha-1}\right) dzdr\\
	&\quad =\int_{t(\delta)}^{t+\delta}\int_{\mathbb{R}}G_\alpha^2(t+\delta-r,x-z)\cdot\delta^{\frac{\kappa(\alpha-1)}{\alpha}}dzdr\\
	&\qquad\qquad	+\int_{t(\delta)}^{t+\delta}\int_{\mathbb{R}}G_\alpha^2(t+\delta-r,x-z)|x-z|^{\alpha-1}dzdr\\
	&\quad\leq C_\alpha\delta^{\frac{2\kappa(\alpha-1)}{\alpha}}+\int_{t(\delta)}^{t+\delta}\int_{\mathbb{R}}G_\alpha^2(t+\delta-r,x-z)|x-z|^{\alpha-1}dzdr,
\end{align*}
where the second equality is derived easily by using the Parseval-Plancherel identity again.

To estimate the second term, recall the property (iv) of the kernel $ G_\alpha(t,x) $, we obtain
\begin{align*}
&\int_{t(\delta)}^{t+\delta}\int_{\mathbb{R}}G_\alpha^2(t+\delta-r,x-z)|x-z|^{\alpha-1}dzdr\\
&\leq c_\alpha\int_{t(\delta)}^{t+\delta}\int_{\mathbb{R}}(t+\delta-r)^{-\frac{2}{\alpha}}\left(1+|z(t+\delta-r)^{-\frac{1}{\alpha}}|^{1+\alpha}\right)^{-2}|z|^{\alpha-1}dzdr\\
&=c_\alpha\int_{t(\delta)}^{t+\delta}\int_{\mathbb{R}}(t+\delta-r)^{-\frac{2}{\alpha}+\frac{\alpha-1}{\alpha}+\frac{1}{\alpha}}(1+|z|^{1+\alpha})^{-2}|z|^{\alpha-1}dzdr\\
&=c_\alpha\int_{t(\delta)}^{t+\delta}(t+\delta-r)^{\frac{\alpha-2}{\alpha}}dr\leq c_\alpha\delta^{\frac{2\kappa(\alpha-1)}{\alpha}}.
\end{align*}
Therefore, we have
\[\mathbb{E}|I_2|^2\leq c_\alpha\delta^{\frac{4(\alpha-1)}{3\alpha-1}},\]
for all $x\in\mathbb{R}, 0\leq\delta^{\kappa}\leq t\wedge 1$ and $t\leq T$.

By combining the bounds on the terms $ \mathbb{E}|I_1|^2, \mathbb{E}|I_2|^2$ and $ \mathbb{E}|I_3|^2 $, we have thus proved the lemma.
\end{proof}
\begin{remark}
	It is interesting that our estimate expand the result in \cite{pospivsil2007parameter}, since the estimate (\ref{eq:le-esti}) reduces to $\delta^{4/5}$ when $\alpha=2$.
\end{remark}
We are ready to prove Theorem \ref{th:temporal variation2}.
\begin{proof}[Proof of Theorem \ref{th:temporal variation2}]
To prove this, it suffices to show that
\[ \frac{1}{n^{1/\alpha}}\sum_{i=1}^{n}(u(t_i,x)-u(t_{i-1},x))^2-c_\alpha(T_2-T_1)^{-\frac{1}{\alpha}}\int_{T_1}^{T_2}\sigma^2(u(r,x))dr\rightarrow0\quad \text{as}\quad n\rightarrow\infty .\]
Let \(\{\widetilde W^{(i)}\}_{i\ge1}\) be a sequence of mutually independent
space--time white noises, independent of \(W\). For each \(i=1,\ldots,n\), define
\(\widetilde\Delta_i(t_{i-1},\delta)\) in the same way as
\(\widetilde\Delta(t_{i-1},\delta)\), with \(\widetilde W\) replaced by
\(\widetilde W^{(i)}\). By construction,
\(\widetilde\Delta_i(t_{i-1},\delta)\) has the same distribution as the linear
increment \(\Delta(t_{i-1},\delta)\), and it is independent of
\(\mathcal F_{t_{i-1}(\delta)}\). Moreover, the estimate in Lemma
\ref{le:lemma-estimation} remains valid with
\(\widetilde\Delta(t_{i-1},\delta)\) replaced by
\(\widetilde\Delta_i(t_{i-1},\delta)\).  Hence we break the required convergence into three parts as follows
\begin{align}
&\frac{1}{n^{1/\alpha}}\sum_{i=1}^{n}\bar{\Delta}^2(t_{i-1},\delta)-c_\alpha(T_2-T_1)^{-\frac{1}{\alpha}}\int_{T_1}^{T_2}\sigma^2(u(r,x))dr\notag\\
&=\frac{1}{n^{1/\alpha}}\sum_{i=1}^{n}\left[\bar{\Delta}^2(t_{i-1},\delta)-\sigma^2(u(t_{i-1}(\delta),x))\widetilde{\Delta}_i^2(t_{i-1},\delta)\right]\label{eq:part11}\\
&\quad+\frac{1}{n^{1/\alpha}}\sum_{i=1}^{n}\sigma^2(u(t_{i-1}(\delta),x))(\widetilde{\Delta}_i^2(t_{i-1},\delta)-c_\alpha\delta^{\frac{\alpha-1}{\alpha}})\label{eq:part22}\\
&\quad+\frac{1}{n^{1/\alpha}}c_\alpha\delta^{\frac{\alpha-1}{\alpha}}\sum_{i=1}^{n}\sigma^2(u(t_{i-1}(\delta),x))-c_\alpha(T_2-T_1)^{-\frac{1}{\alpha}}\int_{T_1}^{T_2}\sigma^2(u(r,x))dr\label{eq:part33}\\
&:=J_1+J_2+J_3\notag
\end{align}
where $t_{i-1}(\delta)=t_{i-1}-\delta^{\kappa}$ and $\kappa$ is given by (\ref{eq:kappa}). 

%It is worth noting that the third part~(\ref{eq:part33}) converges almost surely to zero as $n\to \infty $ using the continuity of $t\mapsto u(t,x)$ for every $x\in {\mathbb R}$ in argument of Riemann sum. 
Indeed, for the third part~(\ref{eq:part33}),
\begin{equation}
J_3
=c_\alpha(T_2-T_1)^{-\frac{1}{\alpha}}\sum_{i=1}^{n}\int_{t_{i-1}}^{t_{i}}\left[\sigma^2(u(t_{i-1}(\delta),x))-\sigma^2(u(r, x))\right]dr.
\end{equation}

By the sample path continuity of $u$ and the Lipschitz continuity of $\sigma$, for almost every $\omega$, the map
\[s\mapsto\sigma^2(u(s,x;\omega))\]
is uniformly continuous on the compact interval $[T_1/2,T_2]$. Since
\[
\sup_{1\le i\le n}\sup_{r\in[t_{i-1},t_i]}
|t_{i-1}(\delta)-r|
\le
\delta^\kappa+\delta
\to0,
\]
then we have
\[|J_3|\leq c_\alpha(T_2-T_1)^{1-\frac{1}{\alpha}}\sup_{\substack{s,r\in[T_1/2,T_2]\\
		|s-r|\le \delta+\delta^{\kappa}}}
|\sigma^2(u(s,x))-\sigma^2(u(r,x))|\rightarrow0\]
almost surely as $n\rightarrow\infty$.

Now, let us deal with its convergence in $L^1(\Omega)$. By the Lipschitz condition on \(\sigma\),
the moment bound, Cauchy--Schwarz inequality, and the temporal regularity estimate (\ref{eq:full increment}), we get
\[
\begin{aligned}
	\mathbb E|J_{3}|
	&\le
	C_{\alpha,T_1,T_2}
	(T_2-T_1)^{-\frac1\alpha}
	\sum_{i=1}^{n}
	\int_{t_{i-1}}^{t_i}
	\left(
	\mathbb E|u(t_{i-1}(\delta),x)-u(r,x)|^2
	\right)^{1/2}dr\\
	&\le
	C_{\alpha,T_1,T_2}
	(T_2-T_1)^{-\frac1\alpha}
	\sum_{i=1}^{n}
	\int_{t_{i-1}}^{t_i}
	|t_{i-1}(\delta)-r|^{\frac{\alpha-1}{2\alpha}}dr\\
	&\le
	C_{\alpha,T_1,T_2}
	(T_2-T_1)^{-\frac1\alpha+1}
	\delta^{\frac{\kappa(\alpha-1)}{2\alpha}}\\
	&=
	C_{\alpha,T_1,T_2}
	(T_2-T_1)^{\frac{(\alpha-1)(4\alpha-1)}{\alpha(3\alpha-1)}
		}
	n^{-\frac{\alpha-1}{3\alpha-1}}.
\end{aligned}
\]
for all $x\in\mathbb{R}$ and $0< T_1<T_2$.

According to the Cauchy-Schwarz inequality and Lemma \ref{le:lemma-estimation}, we get that
\begin{align*}
\mathbb{E}|J_1|
&\le
\frac{1}{n^{1/\alpha}}
\sum_{i=1}^{n}
\mathbb E\left|
\bar{\Delta}^2(t_{i-1},\delta)
-
\sigma^2(u(t_{i-1}(\delta),x))
\widetilde{\Delta}_i^2(t_{i-1},\delta)
\right|\\
&\leq\frac{1}{n^{1/\alpha}}\sum_{i=1}^{n}\mathbb{E}[|\bar{\Delta}(t_{i-1},\delta)+\sigma(u(t_{i-1}(\delta),x))\widetilde{\Delta}_i(t_{i-1},\delta)|^2]^{\frac{1}{2}}\\
&\qquad\qquad\cdot\mathbb{E}[|\bar{\Delta}(t_{i-1},\delta)-\sigma(u(t_{i-1}(\delta),x))\widetilde{\Delta}_i(t_{i-1},\delta)|^2]^{\frac{1}{2}}\\
&\leq \frac{1}{n^{1/\alpha}}C_{\alpha,T_1,T_2}\delta^{\frac{\kappa(\alpha-1)}{\alpha}}\sum_{i=1}^{n}\mathbb{E}[|\bar{\Delta}(t_{i-1},\delta)+\sigma(u(t_{i-1}(\delta),x))\widetilde{\Delta}_i(t_{i-1},\delta)|^2]^{\frac{1}{2}}
\end{align*} 
for all $x\in\mathbb{R}$ and $0< T_1<T_2$.
Moreover,
\[
\begin{aligned}
	&\mathbb E\left|
	\bar{\Delta}(t_{i-1},\delta)
	+
	\sigma(u(t_{i-1}(\delta),x))
	\widetilde{\Delta}_i(t_{i-1},\delta)
	\right|^2\\
	&\le
	2\mathbb E|\bar{\Delta}(t_{i-1},\delta)|^2
	+
	2\mathbb E\left[
	\sigma^2(u(t_{i-1}(\delta),x))
	\widetilde{\Delta}_i^2(t_{i-1},\delta)
	\right]\le
	C_{\alpha,T_1,T_2}
	\delta^{\frac{\alpha-1}{\alpha}},
\end{aligned}
\]
where we used the temporal regularity of \(u\), the fact that
\(\widetilde{\Delta}_i(t_{i-1},\delta)\) has the same distribution as the
linear increment \(\Delta(t_{i-1},\delta)\), the moment bound for \(u\), and
the independence between \(\widetilde{\Delta}_i(t_{i-1},\delta)\) and
\(\mathcal F_{t_{i-1}(\delta)}\).
Therefore,
\begin{align*}
	\mathbb E|J_{1}|
	&\le
	C_{\alpha,T_1,T_2}
	n^{-1/\alpha}
	n\,
	\delta^{\frac{\kappa(\alpha-1)}{\alpha}}
	\delta^{\frac{\alpha-1}{2\alpha}}\\
	&=
	C_{\alpha,T_1,T_2}
	(T_2-T_1)^{
		\frac{(\alpha-1)(7\alpha-1)}{2\alpha(3\alpha-1)}}
	n^{-\frac{\alpha^2-1}{2\alpha(3\alpha-1)}}.
\end{align*}
In particular, \(J_{1}\to0\) in \(L^1(\Omega)\) as $n$ tends to infinity.

For the second term (\ref{eq:part22}), set
	\[
	\sigma_i:=\sigma(u(t_{i-1}(\delta),x)),
	\qquad
	Z_i:=\widetilde\Delta_i^2(t_{i-1},\delta)-c_\alpha\delta^{\frac{\alpha-1}{\alpha}}.
	\]
	We intend to prove that \(J_{2}\to0\) in \(L^2(\Omega)\). Indeed,
	\[
	\mathbb E|J_{2}|^2
	=
	\frac{1}{n^{2/\alpha}}
	\sum_{i,j=1}^n
	\mathbb E[\sigma_i^2\sigma_j^2Z_iZ_j].
	\]
	
	We first record some elementary estimates. Since \(\widetilde\Delta_i\) has the same
	distribution as the linear increment \(\Delta(t_{i-1},\delta)\), the estimates in the
	linear case imply
	\[
	\mathbb E Z_i^2\le C_{\alpha,T_1}\delta^{\frac{2(\alpha-1)}{\alpha}},
	\qquad
	|\mathbb E Z_i|\le C_{\alpha,T_1}\delta^2.
	\]

	Denote
	$m_n:=\left\lfloor \sqrt{n}\right\rfloor$.
	We split the double sum into the near-diagonal part \(|i-j|\leq m_n\) and the far-off-diagonal
	part \(|i-j|> m_n\).
	
	For the near-diagonal part, by Hölder's inequality,
	\[
	\left|
	\mathbb E[\sigma_i^2\sigma_j^2Z_iZ_j]
	\right|
	\le
	\|\sigma_i^2\|_4
	\|\sigma_j^2\|_4
	\|Z_i\|_4\|Z_j\|_4.
	\]
	By the fact that for any $a,b\in\mathbb{R}$, 
	$|a-b|^4\leq C(|a|^4+|b|^4)$. Then, by using that $\widetilde{\Delta}_i(t_{i-1},\delta)$ is centered Gaussian and has the same distribution as $\Delta(t_{i-1},\delta)$, we have 
	\[\mathbb{E}|Z_i|^4\leq C\left(\mathbb{E}\left|\widetilde{\Delta}_i(t_{i-1},\delta)\right|^8+\delta^{\frac{4(\alpha-1)}{\alpha}}\right)\leq C\delta^{\frac{4(\alpha-1)}{\alpha}},\]
	which implies $\|Z_i\|_4\leq C\delta^{\frac{\alpha-1}{\alpha}} $. Hence, together with the moment bound for the $u$ and the Lipschitz continuity of \(\sigma\), we obtain
	\[
	\frac{1}{n^{2/\alpha}}
	\sum_{\substack{1\le i,j\le n\\ |i-j|\leq m_n}}
	\left|
	\mathbb E[\sigma_i^2\sigma_j^2Z_iZ_j]
	\right|
	\le
	C_{\alpha,T_1,T_2}
	n^{-\frac2\alpha}n\,m_n\,\delta^{\frac{2(\alpha-1)}{\alpha}}
	\le
	C_{\alpha,T_1,T_2}n^{-\frac12}.
	\]
	
	Now consider the far-off-diagonal part. Without loss of generality, assume \(i>j\) and
	\(i-j> m_n\), then
	$t_j\le t_{i-1}(\delta)$.
	Hence \(\sigma_i^2\sigma_j^2Z_j\) is measurable with respect to
	\[
	\mathcal H_{i,j}:=
	\mathcal F_{t_{i-1}(\delta)}\vee\sigma\left(\widetilde W^{(j)}\right),
	\]
	whereas \(Z_i\) is independent of \(\mathcal H_{i,j}\).
	Consequently,
	\[
	\mathbb E[\sigma_i^2\sigma_j^2Z_iZ_j]
	=
	\mathbb E[\sigma_i^2\sigma_j^2Z_j]\mathbb E[Z_i].
	\]
	Therefore, by Hölder's inequality,
	\[
	\frac{1}{n^{2/\alpha}}
	\sum_{\substack{1\le j<i\le n\\ i-j> m_n}}
	\left|
	\mathbb E[\sigma_i^2\sigma_j^2Z_iZ_j]
	\right|
	\le
	C_{\alpha,T_1,T_2}
	n^{-\frac2\alpha}n^2\delta^{\frac{\alpha-1}{\alpha}+2}
	\le
	C_{\alpha,T_1,T_2}
	n^{-\frac{\alpha+1}{\alpha}}.
	\]
	
	Combining the near- and far-off-diagonal estimates, we get
	\[
	\mathbb E|J_{2}|^2
	\le
	C_{\alpha,T_1,T_2}
	\left(
	n^{-\frac12}
	+
	n^{-\frac{\alpha+1}{\alpha}}
	\right)
	\to0.
	\]
	Thus \(J_{2,n}\to0\) in \(L^2(\Omega)\). By Cauchy--Schwarz,
	\[
	\mathbb E|J_2|
	\le
	(\mathbb E|J_2|^2)^{1/2}
	\le
	C_{\alpha,T_1,T_2}
	\left(
	n^{-\frac14}
	+
	n^{-\frac{\alpha+1}{2\alpha}}
	\right)
	\le
	C_{\alpha,T_1,T_2}n^{-\frac14}.
	\]
	Combining the estimates for \(J_1\), \(J_2\), and \(J_3\), we obtain
	\[
	\begin{aligned}
		\mathbb E|J_1+J_2+J_3|
		&\le
		C_{\alpha,T_1,T_2}
		\left(
		n^{-\frac{\alpha^2-1}{2\alpha(3\alpha-1)}}
		+n^{-1/4}
		+n^{-\frac{\alpha-1}{3\alpha-1}}
		\right)  \\
		&\le
		C_{\alpha,T_1,T_2}
		n^{-\frac{(\alpha+1)(\alpha-1)}{2\alpha(3\alpha-1)}},
	\end{aligned}
	\]
	since $\alpha\in(1,2]$.
	This proves \eqref{eq:th_2}. The convergence in probability follows immediately from \eqref{eq:th_2}. The proof is complete.
\end{proof}
\begin{remark}\label{rem:uniform-x}
	Although Theorem \ref{th:temporal variation2} is stated for a fixed spatial point
	\(x\in\mathbb R\), the estimate \eqref{eq:th_2} is uniform for \(x\) in any compact
	subset of \(\mathbb R\). More precisely, for every compact set \(K\subset\mathbb R\),
	there exists a constant \(C_{\alpha,T_1,T_2,K}>0\) such that
	\begin{multline*}
		\sup_{x\in K}
		\mathbb{E}\left|
		\frac{1}{n^{1/\alpha}}\sum_{i=1}^{n}(u(t_i,x)-u(t_{i-1},x))^2
		-
		c_\alpha(T_2-T_1)^{-\frac1\alpha}
		\int_{T_1}^{T_2}\sigma^2(u(r,x))\,dr
		\right|\\
		\le
		C_{\alpha,T_1,T_2,K}
		n^{-\frac{(\alpha+1)(\alpha-1)}{2\alpha(3\alpha-1)}}.
	\end{multline*}
	Indeed, all the estimates used in the proof, including the moment bounds,
	the temporal regularity estimate, the approximation estimate in Lemma
	\ref{le:lemma-estimation}, and the estimates for the linear increments, can be
	chosen uniformly for \(x\in K\). In the present translation-invariant setting, the
	constants are in fact independent of \(x\).
\end{remark}

Motivated by Hildebrandt and Trabs~\cite{Hildebrandt2021}, we will introduce the average of the temporal quadratic variations at different places. From Theorem \ref{th:temporal variation2}, we deduce its asymptotic behavior as follows.

\begin{corollary}\label{cor:space-time-average}
		Let \(0<T_1<T_2\), and let \(t_i=T_1+i\delta\), where
		\(\delta=(T_2-T_1)/n\). For \(m,n\ge1\), define
		\[
		V_{n,m}(u)
		:=
		\frac{1}{m}\sum_{j=0}^{m-1}
		\frac{1}{n^{1/\alpha}}\sum_{i=1}^{n}
		\bigl(u(t_i,x_j)-u(t_{i-1},x_j)\bigr)^2,
		\]
		where
		\[
		x_j=\frac{j}{m},\qquad j=0,1,\ldots,m.
		\]
		Then, as \(n,m\to\infty\),
		\[
		V_{n,m}(u)
		\longrightarrow
		c_\alpha(T_2-T_1)^{-\frac1\alpha}
		\int_0^1\int_{T_1}^{T_2}\sigma^2(u(t,x))\,dt\,dx
		\]
		in \(L^1(\Omega)\). Moreover, for all sufficiently large \(n\) and \(m\),
		\[
		\mathbb E\left|
		V_{n,m}(u)
		-
		c_\alpha(T_2-T_1)^{-\frac1\alpha}
		\int_0^1\int_{T_1}^{T_2}\sigma^2(u(t,x))\,dt\,dx
		\right|
		\le
		C_{\alpha,T_1,T_2}
		\left(
		n^{-\frac{(\alpha+1)(\alpha-1)}{2\alpha(3\alpha-1)}}
		+
		m^{-\frac{\alpha-1}{2}}
		\right).
		\]
	\end{corollary}
	
	\begin{proof}
		We decompose the difference as
		\[
		\begin{aligned}
			&V_{n,m}(u)
			-
			c_\alpha(T_2-T_1)^{-\frac1\alpha}
			\int_0^1\int_{T_1}^{T_2}\sigma^2(u(t,x))\,dt\,dx \\
			&=
			V_{n,m}(u)
			-
			\frac{1}{m}\sum_{j=0}^{m-1}
			c_\alpha(T_2-T_1)^{-\frac1\alpha}
			\int_{T_1}^{T_2}\sigma^2(u(t,x_j))\,dt \\
			&\quad+
			\frac{1}{m}\sum_{j=0}^{m-1}
			c_\alpha(T_2-T_1)^{-\frac1\alpha}
			\int_{T_1}^{T_2}\sigma^2(u(t,x_j))\,dt
			-
			c_\alpha(T_2-T_1)^{-\frac1\alpha}
			\int_0^1\int_{T_1}^{T_2}\sigma^2(u(t,x))\,dt\,dx \\
			&=:D_1+D_2.
		\end{aligned}
		\]
		
		For \(D_1\), by Remark \ref{rem:uniform-x} applied with \(K=[0,1]\), we immediately obtain
		\[
			\mathbb E|D_1|\le
			C_{\alpha,T_1,T_2}\,
			n^{-\frac{(\alpha+1)(\alpha-1)}{2\alpha(3\alpha-1)}}.
		\]
		
		For \(D_2\), we get
		\[
		\begin{aligned}
			\mathbb E|D_2|
			&\le
			C_{\alpha,T_1,T_2}
			\sum_{j=0}^{m-1}
			\int_{x_j}^{x_{j+1}}\int_{T_1}^{T_2}
			\mathbb E\left|
			\sigma^2(u(t,x_j))-\sigma^2(u(t,x))
			\right|\,dt\,dx.
		\end{aligned}
		\]
		Hence, by the Lipschitz continuity for $\sigma$, moment bound for \(u\), Cauchy--Schwarz inequality, and the spatial
		regularity estimate,
		\[
		\mathbb E\left|
		\sigma^2(u(t,x_j))-\sigma^2(u(t,x))
		\right|
		\le
		C_{\alpha,T_1,T_2}
		\left(
		\mathbb E|u(t,x_j)-u(t,x)|^2
		\right)^{1/2}
		\le
		C_{\alpha,T_1,T_2}|x_j-x|^{\frac{\alpha-1}{2}}.
		\]
		Therefore,
		\[
		\begin{aligned}
			\mathbb E|D_2|
			&\le
			C_{\alpha,T_1,T_2}
			\sum_{j=0}^{m-1}
			\int_{x_j}^{x_{j+1}}
			|x_j-x|^{\frac{\alpha-1}{2}}\,dx  \\
			&\le
			C_{\alpha,T_1,T_2}
			m\left(\frac1m\right)^{\frac{\alpha+1}{2}}
			=
			C_{\alpha,T_1,T_2}
			m^{-\frac{\alpha-1}{2}}.
		\end{aligned}
		\]
		
		Combining the estimates for \(D_1\) and \(D_2\), we obtain
		\[
		\mathbb E\left|
		V_{n,m}(u)
		-
		c_\alpha(T_2-T_1)^{-\frac1\alpha}
		\int_0^1\int_{T_1}^{T_2}\sigma^2(u(t,x))\,dt\,dx
		\right|
		\le
		C_{\alpha,T_1,T_2}
		\left(
		n^{-\frac{(\alpha+1)(\alpha-1)}{2\alpha(3\alpha-1)}}
		+
		m^{-\frac{\alpha-1}{2}}
		\right).
		\]
		The \(L^1(\Omega)\)-convergence follows immediately.
\end{proof}

\section{Application}
In this section, we present an application for drift parameter estimation associated with the results established in two sections.

We first consider the linear case. Let $\{u_\sigma(t,x),t\geq0,x\in\mathbb{R}\}$ be solution of the fractional stochastic heat equation
\begin{equation}
\left\{
\begin{aligned}
&\frac{\partial}{\partial t}u_\sigma=\Delta_\alpha u_\sigma+\sigma\frac{\partial^2}{\partial t\partial x}W,\quad t\geq0,x\in\mathbb{R}\\
&u_\sigma(0,x)=0,
\end{aligned}
\right.
\end{equation}
where $ \sigma $ is a real positive constant parameter and $\Delta_\alpha:=-(-\Delta)^{\frac\alpha2}$ is Laplacian operator of order $\alpha\in(1,2]$. The mild solution can be expressed as
\[  
u_\sigma(t,x)=\sigma\int_0^t\int_{\mathbb{R}}G_\alpha(t-r,x-z)W(dr,dz).
\] 
Fix \(0<T_1<T_2\), \(x\in\mathbb R\), and let
\(t_i=T_1+i\delta\), where \(\delta=(T_2-T_1)/n\), Theorem \ref{th:temporal variation} implies that
\[   \lim\limits_{n\rightarrow\infty}\frac{1}{n^{1/\alpha}}\sum_{i=1}^{n}(u_\sigma(t_i,x)-u_\sigma(t_{i-1},x))^2 =\sigma^2c_\alpha(T_2-T_1)^{\frac{\alpha-1}{\alpha}} 
\] 
in probability where $ c_\alpha=\dfrac{\Gamma(1/\alpha)}{\pi(\alpha-1)}$. Define
\[  
\hat{\sigma}_n^2=\frac{\sum_{i=1}^{n}(u_\sigma(t_i,x)-u_\sigma(t_{i-1},x))^2}{n^{1/\alpha} c_\alpha(T_2-T_1)^{\frac{\alpha-1}{\alpha}}}.
\] 
Then \(\hat\sigma_n^2\to\sigma^2\) in probability. Since \(\sigma>0\), if we define
\(\hat\sigma_n=(\hat\sigma_n^2)^{1/2}\), then \(\hat\sigma_n\to\sigma\) in probability.

We now consider the nonlinear case. Let $ \{u_\mu(t,x):t\geq0, x\in\mathbb{R}\} $ be the solution of the fractional stochastic heat equation
 \begin{equation}\label{eq:the4eq}
  \left\{
  \begin{aligned}
  &\frac{\partial}{\partial t}u_\mu=\mu\Delta_\alpha u_\mu+\sigma(u_\mu(t,x))\frac{\partial^2}{\partial t\partial x}W,\quad t\geq0,x\in\mathbb{R}\\
  &u_\mu(0,x)=0,
  \end{aligned}
  \right.
 \end{equation}
where $ \mu>0 $ is a real constant parameter. Then the mild solution can be written as
\begin{equation}\label{eq:solution1}
u_\mu(t,x)=\int_0^t\int_{\mathbb{R}}G_\alpha(\mu(t-s),x-z)\sigma(u_\mu(s,z))W(ds,dz).
\end{equation}
We simplify this equation by employing linear scaling, which can be expressed by using the following lemma.
\begin{lemma}\label{le:lemma}
Let $ \{u_\mu(t,x),t\geq0,x\in\mathbb{R}\} $ be the mild solution to (\ref{eq:the4eq}), and let $ v_\mu(t,x)=u_\mu(t/\mu,x) $ for all $ t\geq0, x\in\mathbb{R}. $ Then, the process $ \{v_\mu(t,x),t\geq0, x\in\mathbb{R}\} $ satisfies
\begin{equation}
\left\{
\begin{aligned}
&\frac{\partial}{\partial t}v_\mu=\Delta_\alpha v_\mu+\mu^{-\frac{1}{2}}\sigma(v_\mu(t,x))\frac{\partial^2}{\partial t\partial x}\widetilde{W},\quad t\geq0,x\in\mathbb{R}\\
&v_\mu(0,x)=0,
\end{aligned}
\right.
\end{equation}
where $ \widetilde{W}(dt,dx) $ is a space-time white noise.
 \end{lemma}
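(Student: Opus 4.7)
The plan is a direct time-rescaling argument on the mild form~(\ref{eq:solution1}) of $\bar{u}$. Substituting $t\mapsto t/\mu$ and using $G_\alpha(\mu(t/\mu-s),\cdot)=G_\alpha(t-\mu s,\cdot)$ immediately gives
\[
v(t,x)=\bar{u}(t/\mu,x)=\int_0^{t/\mu}\!\int_{\mathbb{R}} G_\alpha(t-\mu s,x-z)\,\sigma(\bar{u}(s,z))\,W(dz,ds).
\]
The next step is the time change of variable $r=\mu s$. Because the integrator is a space-time white noise rather than an ordinary measure, this substitution has to be paired with the introduction of an accompanying rescaled noise. I would define
\[
\widetilde{W}(A\times[a,b])\;:=\;\mu^{1/2}\,W\bigl(A\times[a/\mu,\,b/\mu]\bigr)
\]
for Borel $A\subset\mathbb{R}$ and $0\leq a\leq b$, and extend $\widetilde{W}$ to a worthy martingale measure in the sense of Walsh. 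A one-line covariance computation gives $\mathbb{E}[\widetilde{W}(A\times[a,b])^2]=\mu\cdot|A|\cdot(b-a)/\mu=|A|(b-a)$, so $\widetilde{W}$ is again a space-time white noise on $\mathbb{R}\times\mathbb{R}_+$.

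With $\widetilde{W}$ in hand, the change-of-variables identity
\[
\int_0^{t/\mu}\!\int_{\mathbb{R}} f(z,s)\,W(dz,ds)\;=\;\mu^{-1/2}\!\int_0^{t}\!\int_{\mathbb{R}} f(z,r/\mu)\,\widetilde{W}(dz,dr)
\]
is first checked for indicator functions and simple predictable $f$ (both sides produce the same Gaussian variable with the same variance by the definition of $\widetilde{W}$), and then extended to the full class of square-integrable predictable integrands by the standard It\^o isometry/approximation procedure. Applied to $f(z,s)=G_\alpha(t-\mu s,x-z)\sigma(\bar{u}(s,z))$ and using $\bar{u}(r/\mu,z)=v(r,z)$, this produces
\[
v(t,x)=\int_0^{t}\!\int_{\mathbb{R}} G_\alpha(t-r,x-z)\,\mu^{-1/2}\sigma(v(r,z))\,\widetilde{W}(dz,dr),
\]
which is precisely the mild formulation of the equation stated for $v$, with the initial condition $v(0,x)=\bar{u}(0,x)=0$ inherited from~$\bar{u}$.

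The only point that truly requires care is the rescaling of the stochastic integrator; everything else is pure substitution. Since the isometry is linear in the variance of the noise and the time change is affine, I do not expect any genuine obstacle beyond verifying the covariance of $\widetilde{W}$ and invoking the density argument for the Walsh integral. The factor $\mu^{-1/2}$ in the diffusion coefficient of the limiting equation is precisely the bookkeeping factor produced by matching the isometries on the two sides of the change of variable.
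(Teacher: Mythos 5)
Your proposal is correct and follows essentially the same route as the paper: a time change of variables in the mild formulation paired with the rescaled noise $\widetilde{W}(A\times[a,b])=\sqrt{\mu}\,W(A\times[a/\mu,b/\mu])$, whose white-noise covariance supplies the factor $\mu^{-1/2}$. You simply spell out the covariance check and the density argument for the Walsh integral that the paper leaves implicit.
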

 \begin{proof}
 According to (\ref{eq:the4eq}), we have, for every $ t\geq 0, x\in\mathbb{R} $,
 \begin{align*}
 v_\mu(t,x)&=u_\mu\left(\frac{t}{\mu},x\right)=\int_0^{\frac{t}{\mu}}\int_{\mathbb{R}}G_\alpha(t-\mu s,x-z)\sigma(u_\mu(s,z))W(ds,dz)\\
 &=\int_0^t\int_{\mathbb{R}}G_\alpha(t-s,x-z)\sigma(u_\mu(s/\mu,z))W\left(d\frac{s}{\mu},dz\right)\\
 &=\mu^{-\frac{1}{2}}\int_0^t\int_{\mathbb{R}}G_\alpha(t-s,x-z)\sigma(v_\mu(s,z))\widetilde{W}(ds,dz).
 \end{align*}
for all $t\geq0$ and $x\in\mathbb{R}$. Note that $ \widetilde{W} $ is also a space-time white noise which follows from the $ \frac{1}{2}$-self-similarity of the space-time white noise in time, that is
 \begin{equation*}
 \widetilde{W}([s,t]\times[a,b])=\sqrt{\mu}W\left(\left[\frac{s}{\mu},\frac{t}{\mu} \right]\times[a,b]\right) ,
 \end{equation*}
which follows from the scaling property of space–time white noise under deterministic time change, see e.g~Walsh \cite{Walsh1986}.
 \end{proof}
As an immediate result, we have the following proposition.

\begin{proposition}
		Fix $0<T_1<T_2$ and $x\in\mathbb{R}$. For $i=1,\dots,n$, define a time grid by
		\[
		t_i=T_1+i\delta, \quad \delta=\frac{T_2-T_1}{n}.
		\]
		Let $\{u_\mu(t,x): t\geq 0, x\in\mathbb{R}\}$ be the mild solution to (\ref{eq:the4eq}).
		Then the limit
		\begin{equation}
			\lim_{n\rightarrow\infty}\frac{1}{n^{1/\alpha}}
			\sum_{i=1}^{n}(u_\mu(t_i,x)-u_\mu(t_{i-1},x))^2
			=
			\frac{c_\alpha(T_2-T_1)^{-\frac{1}{\alpha}}}{\mu^{\frac{1}{\alpha}}}
			\int_{T_1}^{T_2}\sigma^2(u_\mu(r,x))dr
		\end{equation}
		holds in probability, where $ c_\alpha $ is given by \eqref{eq:c-alpha}.
		
		Moreover, if we suppose that $ |\sigma(y)|\geq\sigma_0>0 $ for all $ y\in\mathbb{R} $ and define
		\begin{equation}
			\hat{\mu}_n=
			\left[
			\frac{c_\alpha(T_2-T_1)^{\frac{\alpha-1}{\alpha}}}{n^{(\alpha-1)/\alpha}}
			\frac{\sum_{i=1}^{n}\sigma^2(u_\mu(t_i,x))}
			{ \sum_{i=1}^{n}(u_\mu(t_i,x)-u_\mu(t_{i-1},x))^2}
			\right]^\alpha,
		\end{equation}
		then $\hat{\mu}_n \to \mu$ in probability as $n\to \infty$ and $\hat{\mu}_n$ is a consistent estimator of $\mu$.
	\end{proposition}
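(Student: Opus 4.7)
The plan is to reduce everything to Theorem~\ref{th:temporal variation2} by applying the linear scaling from Lemma~\ref{le:lemma}. Set $v(t,x)=\bar u(t/\mu,x)$, so that $\bar u(t_i,x)=v(\mu t_i,x)$ for each grid point. Lemma~\ref{le:lemma} tells us that $v$ is the mild solution of the parameterless equation driven by a space-time white noise $\widetilde W$ with noise coefficient $\mu^{-1/2}\sigma(v)$, which is still Lipschitz. Hence the weighted quadratic variation of $v$ on the interval $[\mu T_1,\mu T_2]$ along the $n$-point partition with grid points $\mu t_i$ is governed by Theorem~\ref{th:temporal variation2}.

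Carrying this out, I would write
\[
\frac{1}{n^{1/\alpha}}\sum_{i=1}^{n}(\bar u(t_i,x)-\bar u(t_{i-1},x))^2 = \frac{1}{n^{1/\alpha}}\sum_{i=1}^{n}(v(\mu t_i,x)-v(\mu t_{i-1},x))^2,
\]
and apply Theorem~\ref{th:temporal variation2} to $v$ on $[\mu T_1,\mu T_2]$ with the modified coefficient $\mu^{-1/2}\sigma(\,\cdot\,)$. This yields convergence in probability to
\[
c_\alpha\bigl(\mu(T_2-T_1)\bigr)^{-1/\alpha}\int_{\mu T_1}^{\mu T_2}\mu^{-1}\sigma^2(v(r,x))\,dr.
\]
A change of variable $r=\mu s$ turns $\int_{\mu T_1}^{\mu T_2}\sigma^2(v(r,x))\,dr$ into $\mu\int_{T_1}^{T_2}\sigma^2(\bar u(s,x))\,ds$, and the factors of $\mu$ combine to $\mu^{-1/\alpha}$, yielding exactly the claimed limit.

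For the consistency of $\hat\mu_n$, I would combine the above with the standard Riemann sum approximation
\[
\frac{T_2-T_1}{n}\sum_{i=1}^{n}\sigma^2(\bar u(t_i,x)) \longrightarrow \int_{T_1}^{T_2}\sigma^2(\bar u(r,x))\,dr \qquad \text{a.s.,}
\]
which follows from the continuity of $t\mapsto \bar u(t,x)$ and the Lipschitz regularity of $\sigma$ (and whose convergence in $L^1$ is controlled analogously to the argument for the third part \eqref{eq:part33} in the proof of Theorem~\ref{th:temporal variation2}). Forming the quotient inside the brackets of $\hat\mu_n$, the two integrals $\int_{T_1}^{T_2}\sigma^2(\bar u(r,x))\,dr$ cancel between numerator and denominator, and the prefactors simplify to $\mu^{1/\alpha}$; raising to the $\alpha$-th power gives $\mu$. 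The continuous mapping theorem then delivers convergence in probability of $\hat\mu_n$ to $\mu$.

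The main technical point is guaranteeing that the quotient is well-defined asymptotically and that the continuous mapping step is legitimate. The hypothesis $\sigma(y)>\sigma_0>0$ forces the denominator $\sum_{i=1}^n(\bar u(t_i,x)-\bar u(t_{i-1},x))^2$, after normalization by $n^{-1/\alpha}$, to converge in probability to a strictly positive limit bounded below by $\sigma_0^2 c_\alpha (T_2-T_1)^{(\alpha-1)/\alpha}\mu^{-1/\alpha}$, so the quotient stays bounded with high probability and Slutsky's theorem applies. Apart from this, the only other mild obstacle is bookkeeping the powers of $\mu$ in the scaling; everything else reduces mechanically to previously established results.
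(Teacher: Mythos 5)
Your proposal is correct and follows essentially the same route as the paper: reduce to Theorem~\ref{th:temporal variation2} via the scaling $v(t,x)=\bar u(t/\mu,x)$ of Lemma~\ref{le:lemma}, change variables back to recover the factor $\mu^{-1/\alpha}$, and combine with the Riemann-sum approximation of $\int_{T_1}^{T_2}\sigma^2(\bar u(r,x))\,dr$ to get consistency of $\hat\mu_n$. Your explicit remark that $\sigma>\sigma_0>0$ keeps the denominator bounded away from zero (justifying the continuous-mapping/Slutsky step) is a small point the paper leaves implicit, but it is not a different argument.
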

	
	\begin{proof}
		Recall that $v_\mu(t,x)=u_\mu(t/\mu,x)$.  
		Applying Theorem~\ref{th:temporal variation2} and Lemma~\ref{le:lemma}, we have
		\[
		\lim_{n\rightarrow\infty}
		\frac{1}{n^{1/\alpha}}
		\sum_{i=1}^{n}(v_\mu(s_i,x)-v_\mu(s_{i-1},x))^2
		=
		\frac{c_\alpha(S_2-S_1)^{-\frac{1}{\alpha}}}{\mu}
		\int_{S_1}^{S_2}\sigma^2(v_\mu(r,x))dr,
		\]
		where we define $s_i=\mu t_i$, $S_1=\mu T_1$ and $S_2=\mu T_2$.
		
		Since
		\[
		v_\mu(s_i,x)-v_\mu(s_{i-1},x)
		= u_\mu(t_i,x) - u_\mu(t_{i-1},x),
		\]
		we obtain
		\begin{align*}
			\lim_{n\rightarrow\infty}
			\frac{1}{n^{1/\alpha}}
			\sum_{i=1}^{n}( u_\mu(t_i,x)- u_\mu(t_{i-1},x))^2
			&=
			\frac{c_\alpha (\mu T_2-\mu T_1)^{-1/\alpha}}{\mu}
			\int_{\mu T_1}^{\mu T_2}\sigma^2(v_\mu(r,x))dr\\
			&=
			\frac{c_\alpha (T_2-T_1)^{-1/\alpha}}{\mu^{1/\alpha}}
			\int_{T_1}^{T_2}\sigma^2( u_\mu(r,x))dr,
		\end{align*}
		by the change of variables $r=\mu s$.
		
		Now, using the Riemann sum approximation
		\[
		\int_{T_1}^{T_2}\sigma^2( u_\mu(r,x))dr
		=\lim_{n\to\infty}\sum_{i=1}^{n}\sigma^2( u_\mu(t_i,x))(t_i-t_{i-1}),
		\]
		we conclude that
		\[
		\hat{\mu}_n \to \mu \quad \text{in probability as } n\to \infty,
		\]
		which completes the proof.
	\end{proof}

The same argument can be combined with the averaged temporal variation result
to construct a spatially averaged consistent estimator for \(\mu\). We omit the
details.

\begin{ackno}
	The authors thank the anonymous referee for constructive advice that improved the paper.
\end{ackno}

\end{document}